\documentclass[a4paper,oneside,10pt,oldfontcommands]{article}
\usepackage[a4paper, total={426pt, 674pt}]{geometry}

\newcommand{\langue}{anglais}	

\usepackage{ifthen}
\ifthenelse{\equal{\langue}{francais}}{
	\usepackage[french]{babel}}
	{\ifthenelse{\equal{\langue}{anglais}}{\usepackage[english]{babel}}{}}
\usepackage[T1]{fontenc}
\usepackage[utf8]{inputenc}
\usepackage[babel,german=guillemets]{csquotes}
\usepackage{eso-pic}

\usepackage[backend=biber, style=alphabetic, sorting=nyt]{biblatex} 
\AtEveryBibitem{\clearfield{language}}
\AtEveryBibitem{\clearfield{url}}
\AtEveryBibitem{\clearfield{issn}}
\usepackage{lmodern} 
\usepackage{enumitem}
\usepackage{ifthen}
\ifthenelse{\equal{\langue}{francais}}{
	\frenchbsetup{StandardLists=true}}
	{}

\usepackage{graphicx} 
\usepackage{subcaption}

\usepackage{amsmath}
\usepackage{amssymb}
\usepackage{amsthm}
\usepackage{amsfonts}
\usepackage{MnSymbol}
\usepackage{bbold}
\usepackage{mathtools}
\usepackage{stmaryrd}
\usepackage{tikz-cd}
\usepackage{hyperref}
\usepackage{multirow}

\ifthenelse{\equal{\langue}{francais}}{
	\newcommand{\theoremenom}{Théorème}
	\newcommand{\propositionnom}{Proposition}
	\newcommand{\lemmenom}{Lemme}
	\newcommand{\corollairenom}{Corollaire}
	\newcommand{\definitionnom}{Définition}
	\newcommand{\remarquenom}{Remarque}
	\newcommand{\exemplenom}{Exemple}
	\newcommand{\conjecturenom}{Conjecture}
}{\ifthenelse{\equal{\langue}{anglais}}{
	\newcommand{\theoremenom}{Theorem}
	\newcommand{\propositionnom}{Proposition}
	\newcommand{\lemmenom}{Lemma}
	\newcommand{\corollairenom}{Corollary}
	\newcommand{\definitionnom}{Definition}
	\newcommand{\remarquenom}{Remark}
	\newcommand{\exemplenom}{Example}
	\newcommand{\conjecturenom}{Conjecture}
}{}}
	
\newtheorem{theoreme}{\theoremenom}
\newtheorem{proposition}[theoreme]{\propositionnom}
\newtheorem{lemme}[theoreme]{\lemmenom}

\newtheorem{remarque}[theoreme]{\remarquenom}

\usepackage{thmtools}
\usepackage{thm-restate}

\makeatletter
\def\cleartheorem#1{%
    \expandafter\let\csname#1\endcsname\relax
    \expandafter\let\csname c@#1\endcsname\relax
}
\makeatother
\newcommand{\compteurThm}{1}
\newcounter{annexe}

\newcommand{\R}{\mathbb{R}}

\begin{document}

\pagestyle{empty} 


\title{Sharp Poincaré inequality for $W_0^{2,p}$}
\author{
Roméo Leylekian
\footnote{Grupo de Física Matemática, Instituto Superior Técnico, Universidade de Lisboa - \textbf{email:} romeo.leylekian@tecnico.ulisboa.pt}
}
\date{} 
\maketitle


\begin{abstract}
The first eigenvalue of the $p$-bilaplacian over a domain $\Omega$ is the inverse of the Poincaré constant for $W_0^{2,p}(\Omega)$. In dimension two, we prove that it is minimised by disks under area constraint, thereby obtaining a sharp uniform Poincaré inequality.
\end{abstract}




\pagestyle{plain} 


\section{Introduction}
\newcommand{\B}{B}

Poincaré inequality states that the $L^p$ norm of a function vanishing at the boundary of a domain is controlled by the $L^p$ norm of its gradient. A natural higher-order version is the following:
\begin{equation}\label{eq:poincaré}
\int_\Omega |u|^p\leq C_p(\Omega)\int_\Omega|\Delta u|^p,\qquad\forall u\in W_0^{2,p}(\Omega).
\end{equation}
Here $\Omega$ is a bounded open set of $\R^d$ and $W_0^{2,p}(\Omega)$ denotes the subspace of $W^{2,p}(\Omega)$ made of functions vanishing at the boundary, together with their gradient -- formally the closure of compactly supported functions in $W^{2,p}(\Omega)$. The purpose of the present article is to investigate the best possible constant in~\eqref{eq:poincaré}. We will establish that in dimension two and for $1<p<\infty$,
$$
C_p(\Omega)\leq C_p(\B),
$$
where $\B$ is a disk of same area as $\Omega$. The value of $C_p(\B)$ cannot be expressed explicitly -- this already happens with the standard Poincaré inequality, even in the linear case $p=2$, where the constant depends on the first zero of a Bessel function -- but it can be numerically evaluated, see Figure~\ref{fig:numerique}.
For an arbitrary domain $\Omega$, the constant $C_p(\Omega)$ can also be related to a nonlinear eigenvalue problem. Indeed, it is clearly the inverse of
\begin{equation}\label{eq:eigenvalue}
\Gamma_p(\Omega):=\inf_{u\in W_0^{2,p}(\Omega)\setminus\{0\}}\frac{\int_\Omega|\Delta u|^p}{\int_\Omega |u|^p}.
\end{equation}
When the domain is smooth enough, say $C^{1,1}$, the infimum is achieved, by reflexivity of $W_0^{2,p}(\Omega)$ and elliptic estimates. The corresponding function satisfies the system
\begin{equation}\label{eq:eigenvalue equation}
\left\{
\begin{array}{rcll}
\Delta_{2,p} u & = & \Gamma_p(\Omega) |u|^{p-2}u & \text{in }\Omega, \\
u & = & 0 & \text{on }\partial \Omega, \\
\partial_n u & = & 0 & \text{on }\partial \Omega.
\end{array}
\right.
\end{equation}
Here $\partial_n$ denotes the normal derivative while the symbol $\Delta_{2,p}$ stands for the nonlinear operator of fourth-order defined by
$$
\Delta_{2,p} u=\Delta(\Delta u|\Delta u|^{p-2}),
$$
and usually called \emph{$p$-bilaplacian}. For $p=2$ we recover the standard bilaplacian. In view of~\eqref{eq:eigenvalue equation} we say, through misuse of language, that $\Gamma_p(\Omega)$ is the first eigenvalue and that $u$ is a first eigenfunction of the $p$-bilaplacian. We point out that all of this fails when $p=1$~\cite{parini-ruf-tarsi15} whereas for $p=\infty$ equation~\eqref{eq:eigenvalue equation} has to be understood in a generalised sense~\cite{katzourakis-parini}.

Proving that $C_p(\Omega)\leq C_p(B)$ thus amounts to showing that, among domains of given measure, the ball minimises the first eigenvalue of the $p$-bilaplacian. For $p=2$, this corresponds to an old conjecture (1877) of Rayleigh~\cite{rayleigh}. Rayleigh's conjecture was proved by Nadirashvili in dimension two~\cite{nadirashvili} and then generalised by Ashbaugh and Benguria in dimension three~\cite{ashbaugh-benguria}; but it remains open in dimension $d\geq4$ in spite of partial results, including~\cite{szego,ashbaugh-laugesen}. For $p=1$ and $p=\infty$, the result holds respectively in dimension two and in any dimension, in view of the works of Parini et al.~\cite{parini-ruf-tarsi15,katzourakis-parini}. For other values of $p$, nothing seems to be known (see however~\cite[section 7.2]{leylekian} for related inequalities involving the $p$-bilaplacian). The next result shows that, in dimension two, the spectral inequality actually holds for any $p$.

\begin{theoreme}\label{thm:Faber-Krahn}
Let $\Omega$ be a planar bounded open set. Then, for all $1<p<\infty$,
\begin{equation}\label{eq:FK}
\Gamma_p(\Omega)\geq\Gamma_p(\B),
\end{equation}
where $\B$ is a disk of same area as $\Omega$. Furthermore, in case of equality, $\Omega$ coincides with a disk, as long as it admits an eigenfunction.
\end{theoreme}

A nontrivial byproduct of our proof is the following description of the eigenspace on the disk. Note that we do not claim uniqueness of the eigenfunction -- see also Remark~\ref{rmq:unicité}.

\begin{theoreme}\label{thm:cas de la boule}
For the disk any minimiser of~\eqref{eq:eigenvalue} is, up to normalisation, radially symmetric, positive, and decreasing.
\end{theoreme}

\begin{figure}[h!]
\centering
\includegraphics[width=.8\textwidth]{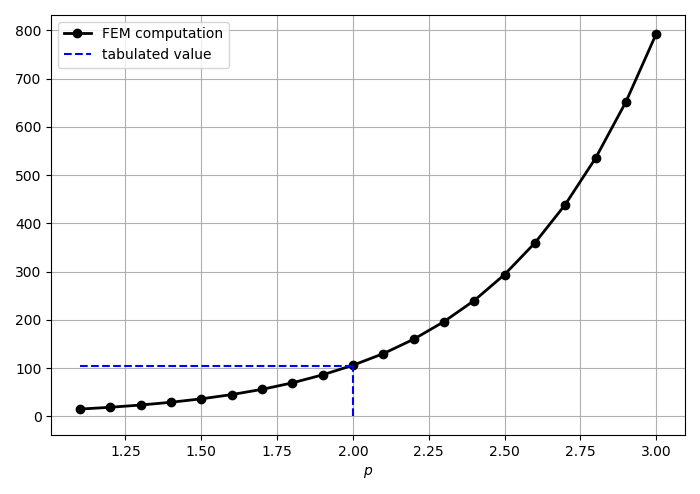}
\caption{Numerical estimation of $\Gamma_p(\mathbb{D})$ for $1<p\leq3$, where $\mathbb{D}$ is the unit disk, obtained by means of an AI generated Finite Element Method with second order elements. The eigenvalue is approached by minimising the Rayleigh quotient~\eqref{eq:eigenvalue}, restricted to radial functions by Theorem~\ref{thm:cas de la boule}. For $p=2$, the result is compared to the tabulated value $\Gamma_2(\mathbb{D})\simeq 104.363$~\cite{ashbaugh-benguria}.}
\label{fig:numerique}
\end{figure}

The challenge with higher-order problems is that the first eigenfunction may change sign, even on smooth convex domains~\cite{kondratiev-kozlov-mazya}. If instead the variational formulation~\eqref{eq:eigenvalue} were restricted to nonnegative functions, then~\eqref{eq:FK} would immediately follow from a traditional symmetrisation procedure introduced by Talenti~\cite{talenti81} and would extend to higher dimension as well. See also~\cite{szego} and \cite[Theorem~1.2]{parini-ruf-tarsi15}.

In the general case, proving Theorem~\ref{thm:Faber-Krahn} reduces, after Talenti's symmetrisation, to the study of an auxiliary problem -- called \emph{two-ball} problem, since it is posed on two disjoint balls $B_a$ and $B_b$, with unknown radii constrained by $a^d+b^d=1$. Those balls are reminiscent of the positive and negative parts of the eigenfunction prior to symmetrisation. The proof is completed if one can show that the two-ball problem is minimal when one ball shrinks while the other covers all the volume, namely when $(a,b)$ is $(0,1)$ or $(1,0)$. In the linear case $p=2$, the two-ball problem can be expressed in terms of Bessel functions and studied accordingly. By this mean, the minimality of $(0,1)$ and $(1,0)$ was obtained in dimensions two and three~\cite{ashbaugh-benguria}. In higher dimension, the same approach showed that the two-ball problem is minimal when the two balls have equal volume~\cite{ashbaugh-laugesen}, equivalently when $a=b=2^{-1/d}$, thereby ruling out this avenue.

In the nonlinear setting, the previous computational approach is not available. Therefore, we follow a more abstract method (historically preceding the computational one) proposed in~\cite{nadirashvili}. Morally, the derivative of the two-ball problem with respect to $a$ is estimated, giving rise to some condition -- see~\eqref{eq:condition pour la derivee} -- between the solutions of a nonlinear fourth-order equation on $B_a$ and $B_b$. In dimension two, a pointwise relation can be proved between those solutions, leading to~\eqref{eq:condition pour la derivee} and hence to the result. In higher dimension, the pointwise relation fails. However, we expect condition~\eqref{eq:condition pour la derivee} to hold in more general regimes: typically, it must hold for $p=2$ and $d=3$; but we leave this question open here. Interestingly, we do not prove the pointwise relation as in~\cite{nadirashvili} -- there it follows from studying the derivative of yet another auxiliary problem, see~\cite[Lemma~3.8]{nadirashvili}. In the nonlinear setting, differentiability properties are more delicate, hence we resort instead to a direct comparison based on a new maximum principle result, see Lemma~\ref{lemme:principe du max} in Appendix.

\noindent\textbf{Note.} A couple of days ago, two articles were published by D. Bucur and R. Laugesen on arXiv~\cite{bucur-laugesen1, bucur-laugesen2}. In~\cite{bucur-laugesen2}, the authors obtain a general isoperimetric inequality for nonlinear Rayleigh quotients, in dimension two. There main result contains our Theorem~\ref{thm:Faber-Krahn} as a particular case. The proof uses a new Talenti-type comparison principle~\cite{bucur-laugesen2}, valid for signed functions in dimension two, and allowing them to bypass the two-ball problem. A similar idea had been developed in~\cite[chapter~7]{leylekian}, though with less success. The proof proposed in the present article differs and uses the (dimension independant) reduction to the two-ball problem. In principle, there is no obstruction in adapting this approach in higher dimension -- yet new ideas are required to establish~\eqref{eq:condition pour la derivee} -- while Bucur-Laugesen's method seems limited to the planar setting.

\noindent\textbf{AI statement.} Except for Figure~\ref{fig:numerique} and possible language editing, no AI assistance was used in developing the scientific content of this document.

\section{Proofs}

As an attempt to clarify the limitation to the planar setting, we consider in the following the general case of a domain $\Omega$ of $\R^d$, and point out explicitly when the restriction $d=2$ is required.

\subsection{Reduction to the two-ball problem}

The first step is to extract the nonlinear two-ball problem.

\begin{proposition}\label{prop:extraction pb 2 boules}
Let $\Omega$ be a bounded open subset of\/ $\R^d$ and $1<p<\infty$. There exist two balls $B_a$ and $B_b$ with $|B_a|+|B_b|=|\Omega|$ (one of the balls may be empty), such that
$$\Gamma_p(\Omega)\geq I_{a,b},$$
where
\begin{equation}
I_{a,b}:=\inf_{v,w}\frac{\int_{B_a}|\Delta v|^p+\int_{B_b}|\Delta w|^p}{\int_{B_a}|v|^p+\int_{B_b}|w|^p}.
\end{equation}
When $a,b\neq0$, the infimum is taken among radial functions $v\in W^{2,p}(B_a)$ and $w\in W^{2,p}(B_b)$ such that
$$
v(a)=w(b)=0,\qquad\int_{\partial B_a}\partial_nv=\int_{\partial B_b} \partial_n w.
$$
When $a=0$ or $b=0$, the infimum is taken with radial $v\in W_0^{2,p}(B_a)$ and $w\in W_0^{2,p}(B_b)$.
\end{proposition}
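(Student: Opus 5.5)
The plan is Talenti's symmetrisation, applied separately to the positive and negative parts of the Laplacian, following Nadirashvili and Ashbaugh--Benguria. Since $\Gamma_p(\Omega)$ is an infimum and the map $u\mapsto u$ does not need an eigenfunction, I work with an arbitrary $u\in C_c^\infty(\Omega)$ and show
$$\frac{\int_\Omega|\Delta u|^p}{\int_\Omega|u|^p}\geq I_{a,b}$$
for some admissible pair $(a,b)$ with $|B_a|+|B_b|=|\Omega|$. The pair $(a,b)$ may depend on $u$. To handle this, I use that $I_{a,b}$ depends on the single parameter $a$ (with $b$ determined by the volume constraint) and show that $a\mapsto I_{a,b}$ is lower semicontinuous on the compact parameter interval. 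Then $\inf_{a}I_{a,b}$ is attained, and taking the infimum over $u$ (dense in $W_0^{2,p}$) gives the proposition for the minimising pair.

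\textbf{Symmetrisation.} Set $\Omega_+=\{u>0\}$ and $\Omega_-=\{u<0\}$, and choose balls with $|B_a|=|\Omega_+|$ and $|B_b|=|\Omega_-|$. The volume total is at most $|\Omega|$; I enlarge one ball afterwards, which I expect to decrease $I_{a,b}$ by a scaling/monotonicity argument. On $\Omega_+$, $u^+$ solves $-\Delta u^+=f:=-\Delta u$ with $u^+=0$ on $\partial\Omega_+$. The boundary terms matter: $u$ and $\nabla u$ vanish on $\partial\Omega$, but $\partial_n u$ need not vanish on $\partial\Omega_\pm\cap\Omega$.

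Talenti's argument runs through the coarea formula, $-\frac{d}{dt}\int_{\{u>t\}}\Delta u=\int_{\{u=t\}}|\nabla u|$, together with the isoperimetric inequality. It gives
$$u^{+\ast}\leq v \quad\text{on } B_a,$$
where $v$ is radial and solves $-\Delta v=(f|_{\Omega_+})^\sharp$-type data, with $v(a)=0$ and flux $\int_{\partial B_a}\partial_n v=\int_{\partial\Omega_+}\partial_n u=-\int_{\Omega_+}\Delta u$. I need the version that keeps this flux term, because the total flux over $\Omega_+$ is not zero. More precisely, I take $\Delta v$ to be the Schwarz rearrangement of $\Delta u$ restricted to $\Omega_+$, arranged as in Talenti: $-\Delta v = (-\Delta u|_{\Omega_+})^\sharp$. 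Then $\int_{B_a}|\Delta v|^p=\int_{\Omega_+}|\Delta u|^p$ by equimeasurability, and $\int_{B_a}|v|^p\geq\int_{\Omega_+}|u|^p$ by the pointwise comparison. The same construction on $\Omega_-$ produces $w$.

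\textbf{Flux matching.} On the full boundary $\partial\Omega_+\cup\partial\Omega_-$ (the nodal set together with $\partial\Omega$), the Neumann data of $u$ vanish on $\partial\Omega$. The divergence theorem on $\Omega$ gives $\int_\Omega\Delta u=0$, hence $\int_{\Omega_+}\Delta u=-\int_{\Omega_-}\Delta u$. This translates into the condition $\int_{\partial B_a}\partial_n v=\int_{\partial B_b}\partial_n w$ with the sign conventions of the statement, where $w$ comes from $-u^-$ so the orientation flips. If $\Omega_-$ is empty (or null), then $b=0$, and the flux of $v$ vanishes, so $v\in W_0^{2,p}(B_a)$ after noting $v'(a)=0$.

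\textbf{Main obstacle.} I expect the hardest part to be making the comparison rigorous with nonzero flux on nodal sets that may be irregular. The steps are: approximate $u$ so that almost every level set is regular (Sard), carry out the ODE inequality for the distribution function, and check that $v\in W^{2,p}$. Lower semicontinuity in $a$, including the degenerate endpoints $a\to0$, is a secondary issue handled by compactness in $W^{2,p}$ of radial functions.
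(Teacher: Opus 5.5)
Your proposal follows the paper's proof: Talenti symmetrisation of $-\Delta u$ on $\Omega_+$ and of $\Delta u$ on $\Omega_-$, equimeasurability for the $L^p$ norms of the Laplacians, the comparison $u_+^*\le v$ for the denominators, the flux condition from $\int_\Omega\Delta u=0$, rescaling to saturate the volume, and semicontinuity of $I_{a,b}$ to absorb the dependence of the radii on $u$ (you are right that lower semicontinuity suffices; the paper invokes the full continuity proved in its appendix). Two small corrections: saturate the volume by dilating \emph{both} balls by a common factor $\lambda\ge1$, which multiplies both fluxes by $\lambda^{d-2}$ (so admissibility is kept) and gives $I_{\lambda a,\lambda b}=\lambda^{-2p}I_{a,b}$, whereas dilating a single ball breaks the flux condition when $d\neq2$; and the flux is not an extra difficulty, since the standard Talenti principle applied to $u_+$ on $\Omega_+$ already fixes $\int_{\partial B_a}\partial_n v=+\int_{\Omega_+}\Delta u$ (the sign in your display is off).
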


\begin{proof}
The proof is a classical application of Talenti's comparison principle. Let $u\in W_0^{2,p}(\Omega)$ and set $\Omega_\pm:=\{\pm u>0\}$. We define $v$ over $B_a$ by the equation $-\Delta v=(-\Delta u)|_{\Omega_+}^*$, with boundary data $v=0$, and $w$ over $B_b$ by $-\Delta w=(\Delta u)|_{\Omega_-}^*$, also with $w=0$ at the boundary. Here $^*$ denotes the (signed) Schwarz symmetrisation~\cite[section 1.3]{kesavan}, whose equimeasurability properties guarantee that
$$
\|\Delta v\|_{L^p(B_a)}=\|\Delta u\|_{L^p(\Omega_+)},\quad\int_{B_a}\Delta v=\int_{\Omega_+}\Delta u,
$$
and similar relations hold between $-u$ over $\Omega_-$ and $w$ over $B_b$. Then by elliptic regularity $v\in W^{2,p}(B_a)$ and $w\in W^{2,p}(B_b)$. By Talenti's comparison principle~\cite[Theorem 3.1.1 and Proposition 3.1.1]{kesavan},
\begin{equation}\label{eq:comparaison}
\begin{split}
u|_{\Omega_+}^*\leq v,\quad\|u\|_{L^p(\Omega_+)}\leq\|v\|_{L^p(B_a)},\quad\|\nabla u\|_{L^2(\Omega_+)}\leq\|\nabla v\|_{L^2(B_a)},
\end{split}
\end{equation}
and the same holds for $-u$ on $\Omega_-$ and $w$ on $B_b$. As a consequence,
$$
\frac{\int_\Omega|\Delta u|^p}{\int_\Omega|u|^p}\geq\frac{\int_{B_a}|\Delta v|^p+\int_{B_b}|\Delta w|^p}{\int_{B_a}|v|^p+\int_{B_b}|w|^p}.
$$
Note that since $\int_{B_a}\Delta v-\int_{B_b}\Delta w=\int_\Omega\Delta u$, by Green's identity $(v,w)$ is an admissible pair for $I_{a,b}$. Take the infimum first with respect to $(v,w)$ and then with respect to $u$ to obtain that $\Gamma_p(\Omega)\geq I_{a,b}$, where $a$ and $b$ may have changed since they depend upon the test function $u$ (here we also need the continuity of $I_{a,b}$ in $(a,b)$, see Lemma~\ref{lemme:continuité pb 2 boule} in Appendix). Note that $|B_a|+|B_b|\leq |\Omega|$ but rescaling the balls to saturate the constraint does not increase $I_{a,b}$, which concludes.
\end{proof}

We now show that $I_{a,b}$ is actually a minimum. Here and in what follows, we use the notation $[x]_p=|x|^{p-2}x$, which defines a bijective increasing function. In particular $\Delta_{2,p} u=\Delta[\Delta u]_p$.

\begin{lemme}\label{lemme:pb 2 boules}
The infimum in $I_{a,b}$ is achieved by a pair $(v,w)$ of nonnegative radial functions solving
\begin{equation}\label{eq:pb deux boules equation}
\left\{
\begin{array}{rcll}
\Delta_{2,p} v & = & I_{a,b} [v]_p& \text{in }B_a, \\
v & = & 0 & \text{on }\partial B_a,
\end{array}
\right.
\qquad
\left\{
\begin{array}{rcll}
\Delta_{2,p} w & = & I_{a,b} [w]_p & \text{in }B_b, \\
w & = & 0 & \text{on }\partial B_b,
\end{array}
\right.
\end{equation}
together with the coupling condition $\int_{\partial B_a}\partial_n v=\int_{\partial B_b}\partial_n w$.
\end{lemme}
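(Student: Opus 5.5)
We would use the direct method on the admissible class of radial pairs, then replace a minimiser by a nonnegative one, and finally derive the Euler--Lagrange system.

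\textbf{Existence.} Assume first $a,b\neq0$ and let $(v_n,w_n)$ be a minimising sequence normalised by $\int_{B_a}|v_n|^p+\int_{B_b}|w_n|^p=1$. Then $\Delta v_n$ and $\Delta w_n$ are bounded in $L^p$. Since $v_n=0$ on $\partial B_a$, elliptic $L^p$ estimates for the Dirichlet Laplacian bound $v_n$ in $W^{2,p}(B_a)$, and likewise $w_n$. By reflexivity and Rellich compactness we extract a subsequence converging weakly in $W^{2,p}$ and strongly in $W^{1,p}$. The boundary conditions pass to the limit, and so does the normalisation. The coupling also passes to the limit, because $\int_{\partial B_a}\partial_n v=\int_{B_a}\Delta v$ is weakly continuous. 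Weak lower semicontinuity of the $L^p$ norm then shows that the limit is a minimiser. When $a=0$ or $b=0$ the problem is the single-ball problem in $W_0^{2,p}$, and the same argument applies.

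\textbf{Nonnegativity.} This is the step I expect to be the main obstacle. The plan is to redo the symmetrisation of Proposition~\ref{prop:extraction pb 2 boules}. Given a minimiser $(v,w)$, define $\tilde v$ on $B_a$ by $-\Delta\tilde v=|\Delta v|$ with $\tilde v=0$ on $\partial B_a$. Equivalently, take the radial solution with data $(-\Delta v)^+$ and split the negative part of $-\Delta v$ toward $w$. A cleaner route is the following:
\begin{itemize}[label=--]
\item Set $f=-\Delta v$ and $g=-\Delta w$, and let $\tilde v$ solve $-\Delta\tilde v=f^*$ on $B_a$. Talenti's principle as used in~\eqref{eq:comparaison} gives $|v|\le\tilde v$ pointwise, after symmetrising the positive and negative parts appropriately.
\item More simply, the maximum principle gives $|v|\le\tilde v$ where $-\Delta\tilde v=|f|$ and $\tilde v=0$ on $\partial B_a$. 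Then $\|\Delta\tilde v\|_p=\|\Delta v\|_p$, $\|\tilde v\|_p\ge\|v\|_p$ and $\tilde v\ge0$.
\end{itemize}
The difficulty is the coupling: $\int|f|\ge|\int f|$, so the fluxes change. The fix I would try is to use the sign freedom: replacing $(v,w)$ by $(v,-w)$ or $(-v,w)$ leaves the quotient unchanged. I would then adjust by a scaling argument. If the flux of $\tilde v$ exceeds that of $\tilde w$, I would increase $\tilde w$'s data by adding a nonnegative radial term concentrated near the boundary. This increases $\tilde w$ and raises the denominator, at a cost to the numerator. Controlling that cost is exactly the delicate point. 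An alternative I would try first is to show directly that in the minimiser $\Delta v$ does not change sign, via the Euler--Lagrange equation together with a radial ODE analysis. If neither works, I would appeal to the comparison Lemma~\ref{lemme:principe du max} from the Appendix.

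\textbf{Euler--Lagrange equations.} Perturb by admissible radial pairs $(\varphi,\psi)$ with $\varphi(a)=\psi(b)=0$ and equal fluxes, and differentiate the quotient. This gives
\[
\int_{B_a}[\Delta v]_p\Delta\varphi+\int_{B_b}[\Delta w]_p\Delta\psi=I_{a,b}\Big(\int_{B_a}[v]_p\varphi+\int_{B_b}[w]_p\psi\Big).
\]
Choosing $\varphi,\psi$ compactly supported yields~\eqref{eq:pb deux boules equation} in the distributional sense; for radial test functions this suffices, since the radial symmetry of $(v,w)$ lets the equations hold against all test functions. The remaining boundary terms encode a relation between $[\Delta v]_p(a)$ and $[\Delta w]_p(b)$, which is not needed for this statement. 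The coupling condition holds by admissibility.
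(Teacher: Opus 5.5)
\textbf{Gap: nonnegativity is not proved.} Your existence argument and your derivation of the Euler--Lagrange system are fine and match the paper. The missing step is nonnegativity: you list several strategies and carry none of them through.

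Your construction $-\Delta\tilde v=|\Delta v|$ does give $\tilde v\ge|v|$ with the same $\|\Delta\tilde v\|_{L^p}$. But it replaces the flux $\int_{B_a}\Delta v$ by $-\int_{B_a}|\Delta v|$, and nothing you propose restores the coupling. None of your fixes works:
\begin{itemize}
\item The ``sign freedom'' is not available. Replacing $(v,w)$ by $(v,-w)$ or $(-v,w)$ changes the sign of only one side of $\int_{\partial B_a}\partial_n v=\int_{\partial B_b}\partial_n w$, so the pair is not admissible unless both fluxes vanish. Only the simultaneous change $(v,w)\mapsto(-v,-w)$ is allowed.
\item The boundary-layer correction is left with its cost uncontrolled.
\item Proving that $\Delta v$ keeps a sign is the wrong target. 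The boundary terms in your own first variation give $\Delta v(a)=-\Delta w(b)$. For a nonnegative minimiser with nonzero flux, $[\Delta v]_p$ and $[\Delta w]_p$ are radial and subharmonic, hence nondecreasing in $r$ and negative at the origin. So one of the two Laplacians changes sign unless $\Delta v(a)=\Delta w(b)=0$.
\item Lemma~\ref{lemme:principe du max} compares two functions with ordered $\Delta_{2,p}$. Applied to $v$ and $0$ it would need $\Delta_{2,p}v>0$, which already requires $v>0$, so the argument is circular.
\end{itemize}

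\textbf{The missing idea: graph-flipping.} The paper (following Nadirashvili) flips the sign of the radial derivative rather than of the Laplacian.
\begin{enumerate}
\item After replacing $(v,w)$ by $(-v,-w)$ if necessary, assume $v'(a)\le0$. Radiality and the coupling $|\partial B_a|\,v'(a)=|\partial B_b|\,w'(b)$ then force $w'(b)\le0$ as well.
\item Define $\tilde v$ radial with $\tilde v(a)=0$ and $\tilde v'=-|v'|$, and define $\tilde w$ in the same way.
\item Write $\Delta v=r^{1-d}(r^{d-1}v')'$. Since $r^{d-1}v'$ is absolutely continuous on $(0,a]$ and tends to $0$ at the origin, one gets $\Delta\tilde v=\Delta v$ where $v'<0$ and $\Delta\tilde v=-\Delta v$ where $v'>0$. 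Both vanish a.e.\ on $\{v'=0\}$, so $|\Delta\tilde v|=|\Delta v|$ a.e., and $\tilde v\in W^{2,p}(B_a)$.
\item Moreover $\tilde v(r)=\int_r^a|v'|\ge\bigl|\int_r^a v'\bigr|=|v(r)|$.
\item Finally $\tilde v'(a)=v'(a)$ and $\tilde w'(b)=w'(b)$, so the fluxes, and hence the coupling, are preserved exactly.
\end{enumerate}
Thus $(\tilde v,\tilde w)$ is admissible, has the same numerator and a denominator at least as large, and is therefore a nonnegative minimiser. Your Euler--Lagrange derivation applies to any minimiser, so it yields the system for this pair. In short, the correct modification flips the sign of $\Delta v$ only on the intervals where $v$ is increasing. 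That is exactly what keeps the boundary value of $r^{d-1}v'$, and so the flux, fixed.
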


\begin{remarque}\label{rmq:unicité}
We suspect, but do not claim, that the pair is unique, up to multiplicative constant.
\end{remarque}

\begin{proof}
By elliptic regularity a minimising sequence is bounded in $W^{2,p}$, hence the direct method of calculus of variations provides a minimising pair $(v,w)$ in $W^{2,p}$, vanishing at the boundary. The coupling condition between $\partial_n v$ and $\partial_n w$ follows from the fact that $(v,w)\mapsto\left(\int_{B_a}\Delta v,\int_{B_b}\Delta w\right)$  is continuous with respect to weak $W^{2,p}$ convergence. The first variation of the quotient in $I_{a,b}$ gives~\eqref{eq:pb deux boules equation} as Euler-Lagrange equations. Positivity is a consequence of the \emph{graph-flipping} argument given in \cite[equation (3.12)]{nadirashvili}: up to multiplicative constant, assume that $\partial_n v\leq 0$ and define the competitor $\tilde{v}$, vanishing at the boundary, by $\partial_r \tilde{v}=-|\partial_r v|$. Then $|\Delta \tilde{v}|=|\Delta v|$ while $\tilde{v}(r)=-\int_r^a\partial_r \tilde{v}\geq\left|\int_r^a\partial_r v\right|=|v(r)|$. Since $\tilde{v}$ has the same normal derivative as $v$, the pair $(\tilde{v},w)$ is admissible and does not increase the quotient, hence it is also a minimising pair. The same argument applies to $w$. Overall, the minimising pair may be assumed nonnegative.
\end{proof}

In the rest of the paper we study the depedence of the two-ball problem with respect to $a$ and $b$. Due to the volume constraint, we may assume by scaling that $a^d+b^d=1$. Our main result is to show that, in dimension two, the two-ball problem is minimal when one of the two balls vanishes.

\begin{theoreme}\label{thm:2-ball problem}
Let $0\leq a, b\leq 1$, such that $a^d+b^d=1$. If $d=2$, then $I_a:=I_{a,b}\geq I_{0,1}=I_{1,0}$.
\end{theoreme}

The proof is decomposed in two steps. The first one consists in estimating, in terms of $v$ and $w$, the variation of $I_a=I_{a,(1-a^d)^{1/d}}$ with respect to $a$. In the second step, we establish some pointwise comparison result between the minimisers $v$ and $w$, which allows to determin the sign of the variation.

\subsection{Variation of the two-ball problem}

Note that the problem is not obviously derivable with respect to $a$, especially when $p\neq 2$, making the question challenging. In~\cite{nadirashvili}, the author uses a transplantation technique, whose purpose is to explicitly construct competitors for $I_{a-\epsilon}$ from those of $I_a$. The method also works in the nonlinear setting, and gives an upper bound on the variation of the functional.

\begin{proposition}\label{prop:Ir croissante}
Let $0<a<b<1$ and $(v,w)$ be a minimising pair for $I_{a,b}$, made of nonnegative radial functions. Assume that
\begin{equation}\label{eq:condition pour la derivee}
a^{2-2d}\int_{B_a}v^{p-1}<b^{2-2d}\int_{B_b}w^{p-1}.
\end{equation}
Then $I_{a-\epsilon}<I_a$ for all small $\epsilon>0$.
\end{proposition}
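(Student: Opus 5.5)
The plan is to follow Nadirashvili's transplantation idea: from the minimising pair $(v,w)$ for $I_a$, build an explicit admissible pair for $I_{a-\epsilon}$ (radii $a'=a-\epsilon$ and $b'$ with $a'^d+b'^d=1$, so $b'=b+\epsilon a^{d-1}b^{1-d}+o(\epsilon)$). We then expand its Rayleigh quotient to first order in $\epsilon$ and show that the first-order coefficient is a positive multiple of
\[
a^{2-2d}\int_{B_a}v^{p-1}-b^{2-2d}\int_{B_b}w^{p-1},
\]
which is negative by~\eqref{eq:condition pour la derivee}. Since we only need an upper bound on $I_{a-\epsilon}$, no differentiability of $I_a$ itself is required; a one-sided expansion of the competitor's quotient suffices.

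\textbf{Step 1 (dilation).} Set $v_\epsilon(x)=v(ax/a')$ on $B_{a'}$ and $w_\epsilon(x)=w(bx/b')$ on $B_{b'}$. Both still vanish on the boundary. The numerator and denominator scale by explicit powers: $\int|\Delta v_\epsilon|^p=(a/a')^{2p-d}\int|\Delta v|^p$ and $\int|v_\epsilon|^p=(a'/a)^{d}\int|v|^p$, and similarly for $w$. However, the flux $\int_{\partial B}\partial_n(\cdot)$ gets multiplied by $(a'/a)^{d-2}$, respectively $(b'/b)^{d-2}$. So in general the coupling condition is broken at order $\epsilon$ (for $d=2$ it is preserved, but the general-$d$ computation is what produces the weights $a^{2-2d}$, $b^{2-2d}$).

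\textbf{Step 2 (flux correction).} To restore admissibility, add correctors $s\,h_{a'}$ and $t\,h_{b'}$, where $h_r(x)=|x|^2-r^2$. These vanish on $\partial B_r$, have explicit flux $2r|\partial B_r|$, and satisfy $\Delta h_r=2d$. The coefficients $s,t=O(\epsilon)$ are chosen so that the fluxes coincide; there is one free parameter, to be optimised.

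\textbf{Step 3 (first-order expansion).} Expand $\int|\Delta(v_\epsilon+sh)|^p=\int|\Delta v_\epsilon|^p+2dps\int[\Delta v_\epsilon]_p+o(\epsilon)$, and similarly for the denominator, $p s\int v^{p-1}h$. The quantities $\int[\Delta v]_p$ and $\int v^{p-1}h$ are then rewritten using the Euler--Lagrange system~\eqref{eq:pb deux boules equation} of Lemma~\ref{lemme:pb 2 boules}:
\begin{itemize}
\item integrating the equation gives $\int_{\partial B_a}\partial_n[\Delta v]_p=I_a\int_{B_a}v^{p-1}$, which is how $\int v^{p-1}$ enters;
\item testing against $h$ and using Green's formula produces the boundary value $[\Delta v]_p(a)$;
\item the natural boundary condition coming from the coupling constraint (a common Lagrange multiplier) identifies $[\Delta v]_p(a)$ with $[\Delta w]_p(b)$, up to sign.
\end{itemize}
Moreover, the pure-dilation terms at order $\epsilon$ cancel against $I_a$ times the denominator, by the Pohozaev-type identity obtained by testing the equation with $x\cdot\nabla v$.

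What should remain is that the quotient of the competitor equals $I_a+C\epsilon\bigl(a^{2-2d}\int v^{p-1}-b^{2-2d}\int w^{p-1}\bigr)+o(\epsilon)$ with $C>0$, which gives $I_{a-\epsilon}<I_a$ for small $\epsilon$.

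\textbf{Main obstacle.} I expect the bookkeeping in Step 3 to be the hard part: getting the exact powers of $a$ and $b$, and verifying that all terms except the displayed one cancel. Justifying the Pohozaev identity and the natural boundary condition for merely $W^{2,p}$ radial solutions will also need care. I would first settle regularity by writing everything in the radial variable, where the equation becomes an ODE and $[\Delta v]_p$ is $C^1$ up to $r=a$.
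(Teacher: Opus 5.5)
Your approach works and is genuinely different from the paper's. One claim in Step 3 is false, though, and it is exactly where the answer comes from: the pure-dilation terms do not cancel against $I_a$ times the denominator.

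\textbf{The correct bookkeeping.} Write $F:=\int_{\partial B_a}\partial_n v=\int_{\partial B_b}\partial_n w$ and $\lambda:=[\Delta v]_p(a)=-[\Delta w]_p(b)$. This is the exact form of your natural boundary condition; it follows by varying along admissible pairs $(h_a,c\,h_b)$. Testing the equation with $x\cdot\nabla v$ gives
\[
P_v:=(2p-d)\int_{B_a}|\Delta v|^p+d\,I_a\int_{B_a}v^p=(p-1)\,a\,|\partial B_a|\,|\Delta v(a)|^p-p\,a\,\partial_n v\,I_a\int_{B_a}v^{p-1}-p(d-2)\lambda F,
\]
and similarly $P_w$, with $(b,-\lambda)$ in place of $(a,\lambda)$. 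To first order, the dilation changes $\int|\Delta\cdot|^p-I_a\int|\cdot|^p$ by $-\epsilon$ times $-a^{-1}P_v+a^{d-1}b^{-d}P_w$, and this is not zero. Three things happen in this combination.
\begin{enumerate}
\item The $|\Delta v(a)|^p$ terms cancel between the two balls, using $|\Delta v(a)|=|\Delta w(b)|$ and $a^{d-1}b^{1-d}|\partial B_b|=|\partial B_a|$. This is where the natural boundary condition is really needed, already for $d=2$.
\item The $(d-2)$ terms equal $p\lambda$ times the flux defect created by the dilation. Your correctors cost exactly $-p\lambda$ times that defect, whatever the free parameter, so these terms cancel.
\item What survives is $-\epsilon$ times the Pohozaev boundary term
\[
p\,I_a\Big(\partial_n v\int_{B_a}v^{p-1}-\frac{a^{d-1}}{b^{d-1}}\,\partial_n w\int_{B_b}w^{p-1}\Big)=\frac{p\,I_a\,a^{d-1}F}{|\partial B_1|}\Big(a^{2-2d}\int_{B_a}v^{p-1}-b^{2-2d}\int_{B_b}w^{p-1}\Big).
\]
\end{enumerate}
Hence your expansion holds with $C=p\,I_a\,a^{d-1}|F|\big/\big(|\partial B_1|(\int_{B_a}v^p+\int_{B_b}w^p)\big)$.

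So the bracket is produced by the rescaling itself; for $d=2$ no corrector is needed at all. The weights $a^{2-2d},b^{2-2d}$ come from $\partial_n v=F/|\partial B_a|$ and $db/da=-a^{d-1}/b^{d-1}$, not from the flux correction as your Step 1 remark says. Note also that $C>0$ requires $F\neq0$. The paper's proof tacitly needs this too. If $F=0$, then $I_{a,b}$ equals the radial $W_0^{2,p}$ eigenvalue of the larger ball $B_b$, and the claim follows by scaling.

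\textbf{Comparison with the paper.} The paper transplants a thin annulus of $-\Delta v$ near $\partial B_a$ to an annulus outside $B_b$. This keeps $\int|\Delta\cdot|^p$ and the flux constraint exactly unchanged. The competitors are $v-v(\alpha)$ and $w_1-w_1(\beta)$, only the denominator moves, and its derivative is read off from $\frac{d}{d\alpha}\int_{B_\alpha}|v-v(\alpha)|^p=-p\,\partial_n v\int_{B_a}v^{p-1}$. That argument uses neither the Euler--Lagrange equation, nor the natural boundary condition, nor a Pohozaev identity, only positivity and admissibility of $(v,w)$. It is therefore shorter and needs no regularity beyond $W^{2,p}$.

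Your curve is smooth in a fixed function space, so your coefficient is an envelope-type derivative. It costs the full Euler--Lagrange system, a Pohozaev identity, and the regularity to justify them. In return it yields the same first-order coefficient as the transplantation. It also shows that in the plane the plain rescaled pair $(v(ax/(a-\epsilon)),w(bx/b'))$ is already a sufficient competitor.
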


\begin{remarque}
In the case $p=2,d=2$ equation~\eqref{eq:condition pour la derivee} boils down to \cite[equation (3.18)]{nadirashvili}.
\end{remarque}

\begin{proof}
Following the proof of~\cite[Lemma~3.19]{nadirashvili}, we consider a ball of radius $0<\alpha<a$, a ball of radius $b<\beta<1$ with $\alpha^d+\beta^d=1$, and we build a couple of admissible functions $(v_\alpha,w_\beta)$ for $I_{\alpha}$ in the following way.

Set $f:=\Delta v$ and $g:=\Delta w$. Let $f_\alpha$ be the restriction of $f$ on $B_{\alpha}$. On the other hand, let $g_\beta$ be an extension of $g$ on $B_{\beta}$ such that $g_\beta$ on the annulus $\{y:b<|y|<\beta\}$ is equimeasurable with $-f$ on the annulus $\{x:\alpha<|x|<a\}$. One possibility is to put
$$
g_\beta(y):=
\begin{cases}
g(y) & \text{if }y\in B_b, \\
(-f|_{\{\alpha<|x|<a\}})^\filledstar(|B_{|y|}|-|B_b|) & \text{if }y\in B_{\beta}\setminus B_b,
\end{cases}
$$
where $^\filledstar$ stands for the one-dimensional nonincreasing rearrangment (see \cite[Definition 1.1.1]{kesavan}). Thanks to the properties of equimeasurable functions, we immediately get
\begin{align*}
&\int_{B_{\alpha}}f_\alpha-\int_{B_{\beta}}g_\beta=\int_{B_a}f-\int_{B_b}g=\int_{\partial B_a}\partial_nv-\int_{\partial B_b} \partial_nw=0, \\
&\int_{B_{\alpha}}|f_\alpha|^p+\int_{B_{\beta}}|g_\beta|^p=\int_{B_a}|f|^p+\int_{B_b}|g|^p=C,
\end{align*}
where $C$ does not depend on $\alpha$ and $\beta$. Now  define $v_\alpha,w_\beta$ such that
\begin{equation}\label{eq:v1 tilde et v2 tilde}
\left\{
\begin{array}{rcll}
-\Delta v_\alpha & = & f_\alpha & \text{in }B_{\alpha}, \\
v_\alpha & = & 0 & \text{on }\partial B_{\alpha},
\end{array}
\right.
\qquad
\left\{
\begin{array}{rcll}
-\Delta w_\beta & = & g_\beta & \text{in }B_{\beta}, \\
w_\beta & = & 0 & \text{on }\partial B_{\beta},
\end{array}
\right.
\end{equation}
A main observation, based on the fact that not only $f_\alpha=f|_{B_\alpha}$ but also $g_\beta=g_1|_{B_\beta}$, is that
\begin{equation}\label{eq:expression de tilde v1 et tilde v2}
v_\alpha=v-v(\alpha),\qquad w_\beta=w_1-w_1(\beta),
\end{equation}
satisfy (\ref{eq:v1 tilde et v2 tilde}). Thanks to these expressions, one shall compute the derivatives of the $L^p$ norms of $v_\alpha$ and $w_\beta$ with respect to $\alpha$, under the constraint $\alpha^d+\beta^d=1$. Indeed, use radial coordinates and recall the rule of calculus $\frac{d}{d t}\int_0^t h(x,t)\mathrm{d}x =h(t,t)+ \int_0^t\partial_t h(x,t)\mathrm{d}x$. Together with (\ref{eq:expression de tilde v1 et tilde v2}) this gives (since $v$ is positive)
$$
\frac{d}{d\alpha}\left.\left[\int_{B_{\alpha}}|v_\alpha|^p\right]\right|_{\alpha=a}=-p\partial_n v\int_{B_a}v^{p-1}.
$$
Similarly, using also that $\frac{d}{d\alpha}\beta=-\frac{\alpha^{d-1}}{\beta^{d-1}}$, one obtains
$$
\frac{d}{d\alpha}\left.\left[\int_{B_{\beta}}|w_\beta|^p\right]\right|_{\alpha=a}=p\frac{a^{d-1}}{b^{d-1}}\partial_n w\int_{B_b}w^{p-1}.
$$
As a consequence,
$$
a^{1-d}\frac{d}{d\alpha}\left.\left[\int_{B_\alpha}|v_\alpha|^p+\int_{B_\beta}|w_\beta|^p\right]\right|_{\alpha=a}=pa^{2-2d}\int_{B_a}v^{p-1}\int_{\partial B_a}\partial_n v-pb^{2-2d}\int_{B_b}w^{p-1}\int_{\partial B_b}\partial_n w< 0,
$$
where the last inequality comes from our assumption~\eqref{eq:condition pour la derivee} and from the fact that $\int_{\partial B_a}\partial_n v=\int_{\partial B_b}\partial_n w$, since $(v,w)$ is admissible. Then, for $0<\alpha<a$ sufficiently close to $a$, we have
$$
\frac{\int_{B_\alpha}|\Delta v_\alpha|^p+\int_{B_\beta}|\Delta w_\beta|^p}{\int_{B_\alpha}|v_\alpha|^p+\int_{B_\beta}|w_\beta|^p}< I_a,
$$
and since $(v_\alpha,w_\beta)$ is admissible for $I_\alpha$,
$$
I_\alpha< I_a,
$$
which concludes.
\end{proof}

\begin{remarque}
In the linear case $p=2$, condition~\eqref{eq:condition pour la derivee} is equivalent to $a^{1-d}\partial_n\Delta v<b^{1-d}\partial_n \Delta w$, as can be seen after integrating equation~\eqref{eq:pb deux boules equation}. This last relation can also be recovered from a shape-derivative approach, as long as the minimisers $v$ and $w$ of $I_a$ are assumed to be derivable with respect to $a$. Indeed if $\dot{v},\dot{w},\dot{I}$ denote the derivatives of $v,w,I_a$ respectively, one verifies that
$$
\left\{
\begin{array}{rcll}
\Delta^2 \dot{v} & = & I_a\dot{v}+\dot{I}v  & \text{in }B_{a}, \\
v & = & 0 & \text{on }\partial B_a,\\
\partial_n v & = & -\Delta v &  \text{on }\partial B_a,
\end{array}
\right.
\qquad
\left\{
\begin{array}{rcll}
\Delta^2 \dot{w} & = & I_a\dot{w}+\dot{I}w  & \text{in }B_{b}, \\
w & = & 0 & \text{on }\partial B_b,\\
\partial_n w & = & \frac{a^{d-1}}{b^{d-1}}\Delta w  & \text{on }\partial B_b,
\end{array}
\right.
$$
from which one obtains the formula for $\dot{I}$ after integration by parts. That such an approach would work also in the nonlinear case seems non-trivial -- especially concerning the derivability of the problem in $a$.
\end{remarque}

\subsection{Comparison between the minimisers}\label{sec:comparaison}

Using Proposition \ref{prop:Ir croissante}, we see that Theorem \ref{thm:2-ball problem} will hold if one is able to show that $a^{2-d}V^{p-1}$ is pointwise smaller than $b^{2-d}W^{p-1}$ on the unit ball $B$, where $v$ and $w$ minimise $I_a$ and $V$ and $W$ are their dilation on $B$ -- namely $V(x)=v(ax)$ and $W(y)=w(by)$. Those functions satisfy
\begin{equation}\label{eq:Euler-Lagrange}
\left\{
\begin{array}{rcll}
\Delta_{2,p} V & = & a^{2p}I_a |V|^{p-2}V & \text{in }B, \\
V & = & 0 & \text{on }\partial B,
\end{array}
\right.
\qquad
\left\{
\begin{array}{rcll}
\Delta_{2,p} W & = & b^{2p}I_a |W|^{p-2}W & \text{in }B, \\
W & = & 0 & \text{on }\partial B,
\end{array}
\right.
\end{equation}
and since $\int_{\partial B_a}\partial_nv=\int_{\partial B_b}\partial_nw$, we also have
\begin{equation}\label{eq:égalité des dérivées normales}
a^{d-2}\partial_nV=b^{d-2}\partial_nW\text{ on }\partial B.
\end{equation}
Here the planeness becomes crucial since for $d>2$, inequality~\eqref{eq:égalité des dérivées normales} implies $|\partial_n V|>|\partial_n W|$ and hence $a^{2-d}V^{p-1}>b^{2-d}W^{p-1}$ close to the boundary, in contradiction with the inequality we seek to establish. Yet what can be proved is that $a^{d-2}V < b^{d-2}W$. In dimension two, this is equivalent to the desired inequality.

\begin{proposition}\label{prop:comparaison}
For $0<a<b$, let $V,W>0$ be radial and solve \eqref{eq:Euler-Lagrange}--\eqref{eq:égalité des dérivées normales}. Then, $a^{d-2}V<b^{d-2}W$.
\end{proposition}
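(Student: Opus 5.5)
The plan is first to remove the scaling factors and then to compare the two normalised functions through a radial, ODE-type argument. Set $\tilde V:=a^{d-2}V$ and $\tilde W:=b^{d-2}W$. Both sides of~\eqref{eq:Euler-Lagrange} are $(p-1)$-homogeneous, so
$$
\Delta_{2,p}\tilde V=\lambda_a\tilde V^{p-1},\qquad \Delta_{2,p}\tilde W=\lambda_b\tilde W^{p-1}\quad\text{in }B,\qquad \lambda_a:=a^{2p}I_a<\lambda_b:=b^{2p}I_a .
$$
Moreover $\tilde V=\tilde W=0$ on $\partial B$, and~\eqref{eq:égalité des dérivées normales} becomes $\partial_n\tilde V=\partial_n\tilde W$ on $\partial B$. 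The two functions therefore have identical Cauchy data of order one on $\partial B$, and the only asymmetry is $\lambda_a<\lambda_b$. We must show that this forces $\tilde V<\tilde W$ in $B$.

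Next I would rewrite the problem as a second-order system. Put $\phi:=[\Delta\tilde V]_p$ and $\psi:=[\Delta\tilde W]_p$, so that $\Delta\phi=\lambda_a\tilde V^{p-1}>0$ and $\Delta\psi=\lambda_b\tilde W^{p-1}>0$. Since everything is radial, these are ODEs in $r$. By the divergence theorem, the equal normal derivatives give $\int_B\Delta\tilde V=\int_B\Delta\tilde W$. Integrating $\Delta\phi$ and $\Delta\psi$ gives radial monotonicity: $\phi$ and $\psi$ are subharmonic radial functions, hence nondecreasing in $r$. I would then compare the pair $(\tilde V,\phi)$ with $(\tilde W,\psi)$ through the sign of $\lambda_b\tilde W^{p-1}-\lambda_a\tilde V^{p-1}$. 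The quantities to track are $D:=\tilde W-\tilde V$, which vanishes to first order at $r=1$ and satisfies $\int_B\Delta D=0$, and $E:=\psi-\phi$, whose Laplacian is $\lambda_b\tilde W^{p-1}-\lambda_a\tilde V^{p-1}$. Because $[\cdot]_p$ is increasing, $\operatorname{sign}E=\operatorname{sign}\Delta D$.

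The argument would be by contradiction. Suppose the open set $\{\tilde V\ge\tilde W\}$ is nonempty. On that set $\Delta E\ge\lambda_b\tilde W^{p-1}-\lambda_a\tilde W^{p-1}>0$, so $E$ is strictly subharmonic there. Since $\Delta D$ has zero mean, it must change sign, and hence so must $E$. I would work with the outermost radius $r_0$ where $D$ changes sign and exploit the vanishing Cauchy data at $r=1$. Near $r=1$, the ordering of $\tilde V$ and $\tilde W$ is dictated by the sign of $\Delta D$ there, that is, by $E(1)$. Then I would propagate this information inward using the sign relations between $D$, $\Delta D$, $E$ and $\Delta E$, together with the monotonicity of $\phi$ and $\psi$. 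The aim is an annulus on which $D$ and $E$ have signs incompatible with $\int_B\Delta D=0$, or with $D(0)$ versus the boundary behaviour.

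Where the purely radial bookkeeping does not close, I would invoke the maximum principle of Lemma~\ref{lemme:principe du max} from the Appendix. It is designed precisely for such a cooperative-type comparison of $p$-bilaplacian solutions with the same boundary data, and I would apply it to $\tilde V$ as a subsolution of the $\lambda_b$-problem, since $\Delta_{2,p}\tilde V=\lambda_a\tilde V^{p-1}<\lambda_b\tilde V^{p-1}$. I expect the main obstacle to be this step. The fourth-order problem has no standard comparison principle, so positivity of $D$ cannot be read off directly. One must exploit both the sign of $\lambda_b-\lambda_a$ and the integral constraint $\int_B\Delta D=0$ simultaneously. The nonlinearity $[\cdot]_p$ also prevents a linear argument, because only sign information, not magnitude, passes between $\Delta D$ and $E$.

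Once $D\ge0$ is established, strictness $D>0$ would follow from a strong-maximum-type argument. If $D$ vanished at an interior radius, then $E$ would change sign there in a way incompatible with $\Delta E>0$ near that radius.
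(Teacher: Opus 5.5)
\textbf{There is a genuine gap.} Your rescaling to $\tilde V=a^{d-2}V$, $\tilde W=b^{d-2}W$ is correct: it gives $\lambda_a<\lambda_b$ and equal Cauchy data on $\partial B$. But the comparison itself is never proved, and the two concrete ingredients you offer do not work.

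First, your sign claim is reversed. On $\{\tilde V\ge\tilde W\}$ one has $\lambda_a\tilde V^{p-1}\ge\lambda_a\tilde W^{p-1}$, hence $\Delta E\le(\lambda_b-\lambda_a)\tilde W^{p-1}$, which is an upper bound. The positive lower bound holds only on $\{\tilde V\le\tilde W\}$. So nothing makes $E$ subharmonic where the comparison fails, and the ``outermost radius'' propagation has nothing to run on.

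Second, Lemma~\ref{lemme:principe du max} only compares two functions once the pointwise sign of $\Delta_{2,p}f-\Delta_{2,p}g$ is known. It is not a comparison principle for sub/supersolutions of $\Delta_{2,p}u=\lambda u^{p-1}$, whose right-hand side involves the unknown. So ``$\tilde V$ is a subsolution of the $\lambda_b$-problem'' does not help. Applied to $f=\tilde W$, $g=\tilde V$, the lemma requires $\lambda_b\tilde W^{p-1}>\lambda_a\tilde V^{p-1}$ in all of $B$, i.e.\ $\theta\tilde V<\tilde W$ with $\theta:=(\lambda_a/\lambda_b)^{1/(p-1)}=(a/b)^{2p/(p-1)}<1$. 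That is a weakened form of the conclusion, so a direct application is circular.

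\textbf{The missing idea} is a bootstrap in the scaling factor, which is exactly what the paper does. It compares $a^\gamma V$ with $b^\gamma W$, lowering $\gamma$ by $2p/(p-1)$ at each step; this is allowed as long as $\gamma\ge d+2/(p-1)$, and the last step lands at $\gamma=d-2$. In your notation, prove $\theta^k\tilde V<\tilde W$ by downward induction on $k\in\mathbb{N}$.

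For $k$ large this holds because $\theta^k\to0$ while $\tilde V/\tilde W$ is bounded on $B$. Boundedness follows from positivity inside and equal, nonzero normal derivatives on $\partial B$, as in the application.

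For the induction step, assume $\theta^{k+1}\tilde V<\tilde W$ and apply the lemma to $f=\tilde W$, $g=\theta^k\tilde V$. By homogeneity
\[
\Delta_{2,p}f-\Delta_{2,p}g=\lambda_b\bigl(\tilde W^{p-1}-(\theta^{k+1}\tilde V)^{p-1}\bigr)>0\quad\text{in }B .
\]
Moreover $f=g=0$ on $\partial B$, and $\partial_n g=\theta^k\partial_n\tilde W\ge\partial_n\tilde W=\partial_n f$ because $\partial_n\tilde W\le0$ and $\theta^k\le1$. The lemma then gives $\theta^k\tilde V<\tilde W$.

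The case $k=0$ is the statement, and it is already strict, so your separate strictness argument is unnecessary.
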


\begin{proof}
The proof follows by an induction based on the maximum principle. Indeed, consider some large $\gamma>0$, and observe that $\Delta_{2,p}(a^{\gamma-2p/(p-1)}V)=I_a [a^\gamma V]_p$, and similarly $\Delta_{2,p}(b^{\gamma-2p/(p-1)}V)=I_a [b^\gamma V]_p$. Therefore,
\begin{equation}\label{eq:application principe du max}
\Delta_{2,p}(a^{\gamma-2p/(p-1)}V)-\Delta_{2,p}(b^{\gamma-2p/(p-1)}W)=I_a[a^\gamma V]_p-I_a[b^\gamma W]_p\underset{\gamma\to\infty}{\sim} -I_a[b^\gamma W]_p<0.
\end{equation}
The convergence holds uniformly and comes from $a<b$, from the positivity of $V,W$, and from~\eqref{eq:égalité des dérivées normales}. In particular, for any large enough $\gamma$ one has $[a^\gamma V]_p<[b^\gamma W]_p$ in $B$, which initializes the induction. On the other hand, at the boundary one has $a^{\gamma-2p/(p-1)}V=b^{\gamma-2p/(p-1)}W=0$; while by~\eqref{eq:égalité des dérivées normales} and by nonpositivity of the normal derivatives
$$
\partial_n(a^{\gamma-2p/(p-1)}V)\geq \partial_n(b^{\gamma-2p/(p-1)}W),$$
as long as $\gamma\geq d+2/(p-1)$. The strong maximum principle, Lemma~\ref{lemme:principe du max}, applied to~\eqref{eq:application principe du max} then transforms the relation $[a^\gamma V]_p<[b^\gamma W]_p$ into $[a^{\gamma-2p/(p-1)} V]_p<[b^{\gamma-2p/(p-1)} W]_p$, which closes the induction. Applying the result to $\gamma=d+2/(p-1)$ gives the desired result.
\end{proof}

\subsection{Conclusions}

\begin{proof}[Proof of Theorem~\ref{thm:2-ball problem}]
Take $d=2$, $0<a<1/\sqrt{2}<b<1$, and consider a minimising pair $(v,w)$ for $I_{a,b}$ with $v,w\geq0$, by Lemma~\ref{lemme:pb 2 boules}. If $\partial_n v=\partial_n w=0$, then $I_{a,b}=\Gamma_p(B_a\sqcup B_b)>\Gamma_p(B_1)=I_{0,1}=I_{1,0}$, as desired. Otherwise $\partial_nv,\partial_nw<0$ and hence $v$ and $w$ are strictly positive. Dilate $v$ and $w$ on the unit ball, as explained in section~\ref{sec:comparaison}, to obtain $V>0$ and $W>0$. By Proposition~\ref{prop:comparaison} we have $V<W$. In particular $\int_B V^{p-1}<\int_B W^{p-1}$, which amounts to~\eqref{eq:condition pour la derivee} in dimension two. By Proposition~\ref{prop:Ir croissante} we conclude that $I_a$ decreases if we slightly reduce $a$. Together with the continuity of $I_a$ with respect to $a$ (see Lemma~\ref{lemme:continuité pb 2 boule}) this shows that the minimum is achieved if and only if $a=0$.
\end{proof}

\begin{proof}[Proof of Theorem~\ref{thm:Faber-Krahn} and Theorem~\ref{thm:cas de la boule}]
Consider without loss of generality a planar domain $\Omega$ of same area as the unit disk $\mathbb{D}$. By Proposition~\ref{prop:extraction pb 2 boules} and Theorem~\ref{thm:2-ball problem} we have
$\Gamma_p(\Omega)\geq I_{a,b}\geq I_{1,0}=\Gamma_p(\mathbb{D})$. If furthermore $\Omega$ has the same eigenvalue as $\mathbb{D}$ and admits an eigenfunction $u$. Recall from~\eqref{eq:comparaison} the inequalities $0<u_+^*\leq v$ and $\int_{B_a}|\nabla u_+^*|^2\leq\int_{\Omega_+}|\nabla u_+|^2\leq\int_{B_a}|\nabla v|^2$, where $v$ is the competitor obtained from $u_+$ after Talenti's symmetrisation, assumed non-identically zero without loss of generality. The chain of inequalities is now
$$
\frac{\int_\Omega|\Delta u|^p}{\int_\Omega |u|^p}\geq\frac{\int_{B_a}|\Delta v|^p+\int_{B_b}|\Delta w|^p}{\int_{B_a}|v|^p+\int_{B_b}|w|^p}\geq I_{a,b}\geq I_{1,0}=\frac{\int_\Omega|\Delta u|^p}{\int_\Omega |u|^p},
$$
hence all inequalities are equalities. In particular $a=1$ and $v=u_+^*>0$ is a minimiser of $I_{1,0}$. Then $u_+^*$ cannot be locally constant -- otherwise $\Delta_{2,p} v=I_{1,0} [v]_p>0$ would fail somewhere inside $B_a$ -- hence $u_+^*$ is strictly decreasing. Together with $\int_{B_a}|\nabla u_+^*|^2=\int_{\Omega_+}|\nabla u_+|^2$, this means by~\cite[Theorem 2.3.3]{kesavan} that $u_+=u_+^*$ a.e., and since $|\Omega_-|=0$ we conclude that $u$ is a.e. positive, radially symmetric and decreasing. Since in dimension two $u\in W^{2,p}(\R^2)$ is continuous, we conclude that $\Omega=\mathbb{D}$, up to isometries. The previous discussion applies when $\Omega$ is the disk, which admits an eigenfunction by regularity, and shows that this eigenfunction must be positive, radially symmetric and decreasing.
\end{proof}

\appendix
\section*{Appendix}

\begin{lemme}\label{lemme:continuité pb 2 boule}
$I_{a,b}$ is continuous with respect to $(a,b)\in[0,\infty)^2\setminus\{(0,0)\}$.
\end{lemme}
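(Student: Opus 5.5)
We prove continuity by establishing upper and lower semicontinuity separately. Both rest on a scaling reduction that puts every competitor on fixed unit balls. For $a,b>0$, set $V(x)=v(ax)$ and $W(y)=w(by)$ on the unit ball $B$. Then
$$\int_{B_a}|\Delta v|^p=a^{d-2p}\int_B|\Delta V|^p,\qquad \int_{B_a}|v|^p=a^d\int_B|V|^p,$$
and the coupling condition becomes $a^{d-2}\int_{\partial B}\partial_nV=b^{d-2}\int_{\partial B}\partial_nW$. Thus $I_{a,b}$ is the infimum over a fixed space of radial pairs $(V,W)\in W^{2,p}(B)^2$, vanishing on $\partial B$, of a quotient whose coefficients and linear constraint depend continuously on $(a,b)$.

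\textbf{Upper semicontinuity.} Let $(a_n,b_n)\to(a,b)$ with $a,b>0$, and take a minimising pair $(V,W)$ for $(a,b)$ given by Lemma~\ref{lemme:pb 2 boules}. If $\int_{\partial B}\partial_nW=0$, the constraint is preserved and the pair is admissible for every $(a_n,b_n)$. Otherwise we correct the constraint by replacing $W$ with $W+t_n\phi$, where $\phi$ is a fixed radial function vanishing on $\partial B$ with $\int_{\partial B}\partial_n\phi\neq0$, and $t_n\to0$ is chosen so that the constraint holds. The quotient then converges, which gives $\limsup I_{a_n,b_n}\le I_{a,b}$.

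The endpoint case $a=0$ needs a separate treatment of the upper bound. Here the limit problem is the Dirichlet problem $W\in W_0^{2,p}(B)$. Any such $W$, paired with $V\equiv0$, is admissible for every $(a_n,b_n)$, and its quotient converges. The case $b=0$ is symmetric.

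\textbf{Lower semicontinuity.} Take minimisers $(V_n,W_n)$, normalised so that $a_n^d\|V_n\|_p^p+b_n^d\|W_n\|_p^p=1$. First assume $a,b>0$. Since $I_{a_n,b_n}$ is bounded (by the upper bound), the Laplacians $\Delta V_n,\Delta W_n$ are bounded in $L^p$. Elliptic estimates for the Dirichlet problem then bound $V_n,W_n$ in $W^{2,p}(B)$. We extract weak limits, with strong convergence in $L^p$ and of the traces of $\partial_n$ by compactness. Hence the normalisation and the coupling pass to the limit, the limit pair is admissible for $(a,b)$, and lower semicontinuity of the $L^p$ norm gives $\liminf I_{a_n,b_n}\ge I_{a,b}$.

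\textbf{The degenerate limit, which is the main obstacle.} Suppose $a_n\to0$. We must show that the $V$-component cannot lower the energy in the limit, i.e.\ that $\liminf I_{a_n,b_n}\ge\Gamma_p(B_b)$. The coupling constraint now forces $b^{d-2}\int_{\partial B}\partial_nW_n$ to be of order $a_n^{d-2}$ times a quantity controlled by $a_n^{-d/p+2}\|\Delta v_n\|$-type factors. I would work in the unscaled variables. The Laplacian of $v_n$ is bounded in $L^p(B_{a_n})$ by the energy bound, so by Hölder
$$\Big|\int_{\partial B_{a_n}}\partial_nv_n\Big|=\Big|\int_{B_{a_n}}\Delta v_n\Big|\le|B_{a_n}|^{1-1/p}\|\Delta v_n\|_p\to0.$$
Consequently the flux of $w_n$ tends to zero, so the limit $w$ has zero flux and, being radial, $\partial_nw=0$; that is, $w\in W_0^{2,p}(B_b)$. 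In addition, the mass $\int_{B_{a_n}}|v_n|^p\to0$, via the bound $\|v_n\|_{L^p(B_{a_n})}\le Ca_n^2\|\Delta v_n\|_p$, which comes from scaling of the Dirichlet Poincaré inequality. So the normalisation transfers entirely to $w$, the quotient is at least that of $w$ in the limit, and we obtain $\liminf I_{a_n,b_n}\ge\Gamma_p(B_b)=I_{0,b}$. Getting the scaling exponents right in these two estimates is the delicate point.
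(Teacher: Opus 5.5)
Your proposal is correct and follows the paper's proof essentially step by step. You get upper semicontinuity from a transplanted minimiser (in the degenerate case, a Dirichlet competitor paired with $0$) and lower semicontinuity from weak $W^{2,p}$ compactness of normalised minimisers; when one radius tends to $0$, you use the same Hölder bound on the flux and the same $a_n^2$ scaling of the Dirichlet estimate to kill the mass of the vanishing component. The only difference is cosmetic. With your scaling $V(x)=v(ax)$ the coupling constraint depends on $(a,b)$, so you restore it with $W+t_n\phi$, whereas the paper rescales so that the constraint reads $\partial_nV=\partial_nW$ on a fixed space, which makes upper semicontinuity immediate as an infimum of continuous functionals.
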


\noindent This is a generalisation of~\cite[Lemma~3.20]{nadirashvili}, yet the proof differs.

\begin{proof}
Consider first the case where $a,b\neq 0$. After a change of variable, $I_{a,b}$ can be seen as the infimum of the functional $
F_{a,b}(V,W):=[a^{-d}\int_{B_1}|\Delta V|^p+b^{-d}\int_{B_1}|\Delta W|^p]/[a^{4-d}\int_{B_1}|V|^p+b^{4-d}\int_{B_1}|W|^p]$,
over the fixed subspace of $W^{2,p}(B_1)^2$ made of pairs of radial functions $(V,W)$ vanishing at the boundary, and such that $\partial_n V=\partial_n W$. Since the functional is continuous in $(a,b)$, by standard results the infimum is upper semi-continuous in $(a,b)$. For lower semi-continuity, observe that $I_{a,b}\leq\Gamma_p(B_a\sqcup B_b)$ since an admissible function for the latter gives rise to an admissible pair for the former, hence away from the origin $I_{a,b}$ is bounded. As a consequence if $(a_n,b_n)\to(a,b)$ and if $(V_n,W_n)$ is a normalised minimising pair for $I_{a_n,b_n}$, by elliptic regularity $V_n$ and $W_n$ are bounded in $W^{2,p}(B_1)$ and hence weakly converge, up to subsequence, towards $V$ and $W$. The pair $(V,W)$ is admissible, as can be seen from passing to the limit in $\int_{B_1}\Delta V_n=\int_{B_1}\Delta W_n$, hence $F_{a,b}(V,W)\geq I_{a,b}$. Taking the liminf in $F_{a_n,b_n}(V_n,W_n)$ then gives lower semi-continuity.

In the case where, say, $b=0$, one cannot resort to $F_{a,b}$, hence we directly consider a minimiser $v\in W_0^{2,p}(B_a)$ of $I_{a,0}$. If $(a_n,b_n)\to(a,0)$, the pair $(v(ax/a_n),0)$ is admissible for $I_{a_n,b_n}$. Taking the $\limsup$ in the associated quotient gives upper semi-continuity. For lower semi-continuity, consider a normalised minimising pair $(v_n,w_n)$ for $I_{a_n,b_n}$. As previously, $v_n$ and $w_n$ have their Laplacian bounded in $L^p$, hence $\tilde{v}_n(x)=v_n(a_nx/a)$ converges weakly towards some $v$ in $W^{2,p}(B_a)$. Furthermore by Hölder inequality $\int_{B_a}\Delta \tilde{v}_n=a^{d-2}/a_n^{d-2}\int_{B_{b_n}}\Delta w_n=O(b_n^{d(1-1/p)})$, hence after passing to the limit $v\in W_0^{2,p}(B_a)$ and we may use it as test function in $I_{a,0}$. On the other hand, $w_n\to 0$ in $L^p$ since $\|w_n\|_{L^p(B_{b_n})}\leq C(b_n)\|\Delta w_n\|_{L^p(B_{b_n})}$, where $C(b_n)$ is the operator norm for the injection of $W^{2,p}\cap W_0^{1,p}(B_{b_n})$ in $L^p(B_{b_n})$, which scales like $b_n^2$. Taking the $\liminf$ in the quotient of $(v_n,w_n)$, after having dropped $\int_{B_{b_n}}|\Delta w_n|^p$, gives lower semi-continuity.
\end{proof}

\begin{lemme}[Maximum principle for radial functions]\label{lemme:principe du max}
Let $f,g:B\to\R$ be $C^2(\overline{B})$ radial functions such that $f\geq g$ on $\partial B$ and $\partial_n f\leq \partial_n g$ on $\partial B$. Then,
$$
\Delta_{2,p} f>\Delta_{2,p} g\text{ in }B\qquad \Longrightarrow \qquad f>g\text{ in }B.
$$
\end{lemme}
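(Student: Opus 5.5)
The plan is to reduce everything to one-dimensional ODE arguments in the radial variable $r=|x|$, using twice the radial form of the Laplacian $\Delta h=r^{1-d}\bigl(r^{d-1}h'\bigr)'$. Set $F:=[\Delta f]_p$, $G:=[\Delta g]_p$ and $h:=f-g$. I understand the hypothesis $\Delta_{2,p}f>\Delta_{2,p}g$ in the classical sense, so that $F,G$ are radial and $C^2$ away from the origin and $C^1$ up to it; this is implicit in the statement and I would simply assume it.

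\textbf{Step 1: sign structure of $\Delta h$.} The hypothesis reads $\bigl(r^{d-1}(F-G)'\bigr)'>0$ on $(0,1)$. The quantity $r^{d-1}(F-G)'$ vanishes at $r=0$ because $F,G$ are $C^1$ and radial. It is strictly increasing, so it is positive on $(0,1]$. Hence $F-G$ is strictly increasing in $r$. Since $x\mapsto[x]_p$ is strictly increasing, $\Delta f-\Delta g=\Delta h$ has the same sign as $F-G$. Therefore there is $r_0\in[0,1]$ such that $\Delta h<0$ on $[0,r_0)$ and $\Delta h>0$ on $(r_0,1]$, with at most one sign change.

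\textbf{Step 2: use the Neumann condition.} Define $\varphi(r):=r^{d-1}h'(r)=\int_0^r s^{d-1}\Delta h(s)\,\mathrm{d}s$. By the boundary hypothesis $\varphi(1)=h'(1)=\partial_nf-\partial_ng\leq0$.

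\begin{itemize}
\item If $r_0=0$, then $\Delta h>0$ on $(0,1]$, so $\varphi(1)>0$. This is a contradiction, hence $r_0>0$.
\item On $(0,r_0)$ the function $\varphi$ is strictly decreasing from $\varphi(0)=0$, so it is negative there.
\item On $(r_0,1)$ it is strictly increasing up to $\varphi(1)\leq0$, so it is negative there as well.
\end{itemize}

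Thus $\varphi<0$ on $(0,1)$, that is, $h'<0$ on $(0,1)$.

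\textbf{Step 3: conclude.} Since $h(1)=f-g\geq0$ on $\partial B$ and $h$ is strictly decreasing on $[0,1]$, we get $h(r)>h(1)\geq0$ for every $r<1$. This is exactly $f>g$ in $B$.

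\textbf{Main obstacle.} The delicate point is the behaviour at the origin: justifying that $r^{d-1}(F-G)'\to0$ as $r\to0$, and that the strict inequality survives integration from $0$. This is where the regularity of $[\Delta f]_p$ and $[\Delta g]_p$ enters, and I would state it as part of the hypotheses. The rest is an elementary monotonicity argument.
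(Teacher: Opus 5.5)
Your proof is correct, and it takes a genuinely different and more elementary route than the paper.

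\textbf{The paper's route.} It argues by contradiction, using PDE tools.
\begin{enumerate}
\item It picks a radius $r_0$ where $h=f-g$ is minimal.
\item It uses the maximum principle and Hopf's lemma on the annulus $\{r_0<|x|<1\}$ to show that $\Delta h$ must vanish somewhere in $\{r_0\le|x|<1\}$, and lets $R_0$ be the largest such zero.
\item It applies the strong maximum principle to $[\Delta f]_p-[\Delta g]_p$ on $B_{R_0}$ to get $\Delta h<0$ there.
\item It uses Hopf's lemma again to show $R_0>r_0$.
\item It contradicts the minimality of $h(r_0)$ via the minimum principle for the superharmonic function $h$ on $B_{R_0}$.
\end{enumerate}

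\textbf{Your route.} You replace all of this by two integrations of the radial form $\Delta\phi=r^{1-d}\bigl(r^{d-1}\phi'\bigr)'$. The first integration shows that $[\Delta f]_p-[\Delta g]_p$ is strictly increasing in $r$ on all of $[0,1]$. This is a global version of what the paper only extracts on $B_{R_0}$, and it gives at once that $\Delta h$ changes sign at most once, from negative to positive. The second integration, combined with $h'(1)\le 0$, gives $h'<0$ on $(0,1)$.

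\textbf{What each buys.} Your version is a direct proof with no Hopf lemma and no strong maximum principle. It also gives a stronger conclusion: $f-g$ is strictly decreasing in $r$, not merely positive. The regularity at the origin that you flag is not an extra cost of your method. The paper needs the same thing implicitly when it applies the maximum principle to $[\Delta f]_p-[\Delta g]_p$ on a ball containing the origin. Conversely, the paper's maximum-principle phrasing does not gain generality. It also uses radial symmetry essentially, through the fact that the zero set of $\Delta h$ is a sphere on which $[\Delta f]_p=[\Delta g]_p$.
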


\noindent In the linear case, this result was already know by \cite{laurençot-walker}.

\begin{proof}
Assume by contradiction that $h:=f-g$ is not positive. Let $r_0$ be  the radius of a circle on which $h$ reaches its minimal value (recall that $h$ is radially symmetric). Then, $h(r_0)\leq 0$, $\nabla h(r_0)=0$, and $\Delta h(r_0)\geq0$.

We show that $\Delta h$ is not positive on the annulus $\{r_0<|x|<1\}$. Indeed, otherwise, by the maximum principle, $h$ would reach its maximum at the boundary of the annulus.
But since $h(r_0)\leq 0\leq h(1)$, the maximum is attained at the boundary of $B$. Then, by Hopf boundary Lemma, $\partial_n h>0$ on $\partial B$, which is impossible.

Therefore, $\Delta h$ takes nonpositive values in $\{r_0< |x|<1\}$. But since $\Delta h(r_0)\geq 0$, by continuity, $\Delta h$ vanishes somewhere in $\{r_0\leq|x|<1\}$.
Let us denote $R_0\in[r_0,1]$ the greatest radius of a circle on which $\Delta h$ vanishes, that is $R_0=\sup\{r\in[r_0,1[:\Delta h(r)=0\}$.

Since $\Delta ([\Delta f]_p-[\Delta g]_p)>0$ in the ball $\{|x|<R_0\}$, and since $[\Delta f]_p=[\Delta g]_p$ on its boundary, we apply the strong maximum principle and obtain that $[\Delta f]_p<[\Delta g]_p$, and hence that $\Delta h<0$ in $B_{R_0}$. We deduce in the following that $R_0>r_0$.

Assume first that $r_0>0$. If we had $R_0=r_0$ we would get $\nabla h(R_0)=\nabla h(r_0)=0$, but Hopf boundary Lemma applied in $B_{R_0}$ would lead to the contradiction $|\nabla h(R_0)|=|\partial_n h(R_0)|>0$. Assume instead that $0=r_0=R_0$, so that $h(0)\leq0$ and $\nabla h(0)=0$. Then, due to the definition of $R_0$, $\Delta h\neq0$ in $\{0<|x|<1\}$. If $\Delta h<0$, since $h(0)\leq0\leq h(1)$ Hopf Lemma would show that $\nabla h(0)\neq0$, a contradiction; and if $\Delta h>0$, it would give $\partial_n h>0$, a contradiction as well.

Now, applying the strong maximum principle in $B_{R_0}$ shows that the minimum of $v$ is attained at the boundary. In particular we have $h(R_0)<h(r_0)$, a contradiction since $h$ was supposed to reach its minimum at $r_0$.
\end{proof}

\section*{Acknowledgements}
I want to thank Simone Mauro for some interesting discussions on the subject. This work was partially supported by Aix-Marseille Université, CNRS, I2M, Marseille (France) and by the Funda\c{c}\~ao para a Ci\^encia e a Tecnologia, Portugal, via the research centre
GFM, reference UID/00208/2025.



\printbibliography



\end{document}